\newtheorem{thm}{Theorem}[section]
\newtheorem{lem}{Lemma}[section]
\theoremstyle{definition}
\theoremstyle{remark}
\newtheorem{rem}{Remark}[section]
\numberwithin{equation}{section}
\def\ind{{\rm 1\hspace{-0.90ex}1}}
\begin{document}

\title{Kolmogorov distance between the exponential functionals of fractional Brownian motion
}
\author{Nguyen Tien Dung
\footnote{Email: dung\_nguyentien10@yahoo.com}
}

\date{July 20, 2019}          % Comptes Rendus Mathematique

\maketitle
\begin{abstract}
In this note, we investigate the continuity in law with respect to the Hurst index of the exponential functional of the fractional Brownian motion. Based on the techniques of Malliavin's calculus, we provide an explicit bound on the Kolmogorov distance between two functionals with different Hurst indexes.
\end{abstract}
\noindent\emph{Keywords:} Fractional Brownian motion, Exponential functional, Malliavin calculus.\\
{\em 2010 Mathematics Subject Classification:} 60G22, 60H07.

\section{Introduction} Let $B^H=(B^H_t)_{t\in [0,T]}$ be a fractional Brownian motion (fBm) with Hurst index $H\in(0,1).$ We recall that fBm admits the Volterra represention
\begin{equation}\label{densityCIR02}
B^H_t=\int_0^t K_H(t,s)dW_s,
\end{equation}
where $(W_t)_{t\in [0,T]}$ is a standard Brownian motion and  for some normalizing constants $c_H$ and $c'_H,$ the kernel  $K_H$ is given by $ K_{H}(t,s) = c_{H}s^{1/2 -H}\int_{s}^{t}(u-s)^{H-\frac{3}{2}}u^{H-\frac{1}{2}}du$ if $H>\frac{1}{2}$ and
$$K_H(t,s)=c_H\bigg[\frac{t^{H-\frac{1}{2}}}{s^{H-\frac{1}{2}}}(t-s)^{H-\frac{1}{2}}
-(H-\frac{1}{2})\int\limits_s^t\frac{u^{H-\frac{3}{2}}}{s^{H-\frac{1}{2}}}(u-s)^{H-\frac{1}{2}}du\bigg]\,\,\text{if}\,\,H<\frac{1}{2}.$$
%$$ K_{H}(t,s) = c_{H}s^{1/2 -H}\int_{s}^{t}(u-s)^{H-\frac{3}{2}}u^{H-\frac{1}{2}}du\,\,\text{if}\,\,H>\frac{1}{2}.$$
%where $c_H$ is a coefficient depending only on $H.$
Given real numbers $a$ and $\sigma,$ we consider the exponential functional of the form
$$F_H=\int_0^T e^{as+\sigma B^H_s}ds.$$
It is known that this functional plays an important role in several domains. For example, it can be used to investigate the finite-time blowup of positive solutions to semi-linear stochastic partial differential equations \cite{Dozzi2014}.  In the special case $H=\frac{1}{2},$ fBm reduces to a standard Brownian motion and a lot of fruitful properties of $F_{\frac{1}{2}}$ can be founded in the literature, see e.g. \cite{Matsumoto2005a,Matsumoto2005b,Pintoux2010,Yor2001}. In particular, the distribution of $F_{\frac{1}{2}}$ can be computed explicitly. However, to the best our knowledge, it remains a challenge to obtain the deep properties of $F_H$ for $H\neq \frac{1}{2}.$

On the other hand, because of its applications in statistical
estimators, the problem of proving the continuity in law with
respect to $H$ of certain functionals has been studied by several
authors. Among others, we refer the reader to
\cite{Jolis2010,Koch2019,Richard2017,Saussereau2012} and the references therein for the
detailed discussions and the related results. Motivated by this observation, the aim of the present paper is to investigate the continuity in law of the exponential functional $F_H.$ Intuitively, the continuity of $F_H$ with respect to $H$ is not surprising. However, the interesting point of Theorem \ref{tyi3} below is that we are able to give an explicit bound on Komogorov distance between two functionals with different Hurst indexes.
\begin{thm}\label{tyi3}For any $H_1,H_2\in(0,1),$ we have
\begin{equation}\label{hk1}
\sup\limits_{x\geq 0}|P\left(F_{H_1}\leq x\right)-P\left(F_{H_2}\leq
x\right)|\leq C|H_1-H_2|,
\end{equation}
 where $C$ is a positive
constant depending on $a,\sigma,T$ and $H_1,H_2.$
\end{thm}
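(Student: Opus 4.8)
The plan is to place both functionals on the Wiener space of the single Brownian motion $W$ from the Volterra representation \eqref{densityCIR02} and to estimate the Kolmogorov distance by a Malliavin integration-by-parts argument along the linear interpolation of the Hurst parameter. Writing $H_\theta=(1-\theta)H_1+\theta H_2$ for $\theta\in[0,1]$, I set $B^{H_\theta}_t=\int_0^t K_{H_\theta}(t,s)\,dW_s$ and $F_\theta=\int_0^T e^{as+\sigma B^{H_\theta}_s}\,ds$, so that $F_0=F_{H_1}$ and $F_1=F_{H_2}$. For a smooth approximation $h_\varepsilon$ of $\ind_{(-\infty,x]}$ with $\|h_\varepsilon\|_\infty\le1$, differentiating in $\theta$ and integrating back gives
\begin{equation*}
E[h_\varepsilon(F_{H_2})]-E[h_\varepsilon(F_{H_1})]=\int_0^1 E\!\left[h_\varepsilon'(F_\theta)\,\dot F_\theta\right]d\theta,\qquad \dot F_\theta:=\tfrac{d}{d\theta}F_\theta .
\end{equation*}
Assuming each $F_\theta$ is nondegenerate, I would rewrite $h_\varepsilon'(F_\theta)=\langle Dh_\varepsilon(F_\theta),DF_\theta\rangle_{\mathcal H}/\|DF_\theta\|_{\mathcal H}^2$ and use the duality between $D$ and the divergence $\delta$ to remove the derivative,
\begin{equation*}
E\!\left[h_\varepsilon'(F_\theta)\,\dot F_\theta\right]=E\!\left[h_\varepsilon(F_\theta)\,\delta\!\Big(\tfrac{\dot F_\theta\,DF_\theta}{\|DF_\theta\|_{\mathcal H}^2}\Big)\right].
\end{equation*}
Since $\|h_\varepsilon\|_\infty\le1$, letting $\varepsilon\to0$ and using $F_0=F_{H_1}$, $F_1=F_{H_2}$ yields
\begin{equation*}
\sup_{x}\big|P(F_{H_1}\le x)-P(F_{H_2}\le x)\big|\le\int_0^1 E\!\left[\Big|\delta\!\Big(\tfrac{\dot F_\theta\,DF_\theta}{\|DF_\theta\|_{\mathcal H}^2}\Big)\Big|\right]d\theta .
\end{equation*}

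The linear factor $|H_1-H_2|$ now appears through $\dot F_\theta$. Indeed $\dot F_\theta=\sigma\int_0^T\big(\tfrac{d}{d\theta}B^{H_\theta}_s\big)e^{as+\sigma B^{H_\theta}_s}\,ds$ with $\tfrac{d}{d\theta}B^{H_\theta}_s=(H_2-H_1)\int_0^s(\partial_H K_H)(s,u)\big|_{H=H_\theta}\,dW_u$, so that $\dot F_\theta=(H_2-H_1)\widehat F_\theta$ for a process $\widehat F_\theta$ that does not carry the prefactor. Expanding the divergence by $\delta(\dot F_\theta\,u)=\dot F_\theta\,\delta(u)-\langle D\dot F_\theta,u\rangle_{\mathcal H}$ with $u=DF_\theta/\|DF_\theta\|_{\mathcal H}^2$ and applying Cauchy--Schwarz, the integrand is bounded by
\begin{equation*}
|H_1-H_2|\Big(\|\widehat F_\theta\|_{L^2}\,\|\delta(u)\|_{L^2}+\|D\widehat F_\theta\|_{L^2(\Omega;\mathcal H)}\,\big\|\,\|DF_\theta\|_{\mathcal H}^{-1}\big\|_{L^2}\Big).
\end{equation*}
Thus it remains to bound the bracketed quantity uniformly in $\theta\in[0,1]$, which reduces the theorem to two families of estimates: moment bounds for $\widehat F_\theta$ and $D\widehat F_\theta$, coming from the $L^2$-control of $\partial_H K_H$ together with the fact that $e^{\sigma B^{H_\theta}_s}$ has all moments bounded uniformly for $H_\theta$ in a compact subinterval of $(0,1)$; and the negative-moment (nondegeneracy) estimates $\sup_\theta E[\|DF_\theta\|_{\mathcal H}^{-p}]<\infty$ and $\sup_\theta E[\delta(u)^2]<\infty$, the latter also requiring $F_\theta\in\mathbb D^{2,p}$.

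The main obstacle is the nondegeneracy, i.e.\ obtaining a lower bound with controlled negative moments, uniformly in $\theta$, for
\begin{equation*}
\|DF_\theta\|_{\mathcal H}^2=\int_0^T\Big(\sigma\int_r^T K_{H_\theta}(s,r)\,e^{as+\sigma B^{H_\theta}_s}\,ds\Big)^2dr .
\end{equation*}
When $H_\theta>\tfrac12$ the kernel $K_{H_\theta}(s,r)$ is positive and one can bound $D_rF_\theta$ below using the positivity of the exponential integrand; but for $H_\theta<\tfrac12$ the kernel changes sign, so a direct positivity argument fails and one must instead extract a lower bound from the behaviour of $K_{H_\theta}(s,r)$ as $s\downarrow r$ on a short interval $[r,r+\eta]$, where the exponential factor is bounded below, and then convert this into a small-ball estimate for $\|DF_\theta\|_{\mathcal H}$. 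Carrying out this nondegeneracy estimate uniformly in $\theta\in[0,1]$ and over a compact range of Hurst indices is the delicate step, and it is through these bounds that the explicit constant $C$ acquires its dependence on $a,\sigma,T,H_1,H_2$.
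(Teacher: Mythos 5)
There is a genuine gap, and it sits exactly where you place it: the negative-moment (nondegeneracy) estimate for $\|DF\|^2_{L^2[0,T]}$ is what the constant $C$ is made of, and you leave it as ``the delicate step'' rather than proving it. Worse, the obstacle you describe rests on a false premise. For $H<\frac12$ the Volterra kernel does \emph{not} change sign: in the displayed formula the prefactor $-(H-\frac12)$ of the integral term is positive, so both terms are nonnegative and $K_H(t,s)\ge0$ for every $H\in(0,1)$. Consequently the ``direct positivity argument'' you discard is precisely the one that works, and no small-ball analysis near the diagonal is needed. The paper's Lemma \ref{tt5l} runs as follows: bound the exponential factor below by $e^{-|a|T+\sigma\min_{0\le s\le T}B^H_s}$, pull it out using $K_H\ge 0$, and then compute exactly, by Fubini and the covariance of fBm,
\begin{equation*}
\int_0^T\Big(\int_r^TK_H(s,r)\,ds\Big)^2dr=\int_0^T\int_0^TE[B^H_sB^H_t]\,ds\,dt=\frac{T^{2H+2}}{2H+2}>0 ,
\end{equation*}
so that $\big(\int_0^T|D_rF_H|^2dr\big)^{-1}\le \frac{2H+2}{T^{2H+2}\sigma^2}e^{2|a|T+2\sigma\max_s(-B^H_s)}$, which lies in every $L^p$ by Fernique's theorem. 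Until you supply this (or an equivalent) argument, your proof is a reduction of the theorem to an unproved and, as you frame it, harder statement.

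Beyond that, your interpolation $H_\theta=(1-\theta)H_1+\theta H_2$ is a genuinely different decomposition from the paper's, and it imports extra unverified machinery: you need differentiability of $H\mapsto K_H(t,s)$, uniform $L^2$ bounds on $\partial_HK_H$ to control $\widehat F_\theta$ and $D\widehat F_\theta$, and all nondegeneracy and $E[\delta(u)^2]$ bounds uniformly in $\theta$. The paper avoids all of this by working only at the two endpoints: it writes $\ind_{[0,x]}(F_{H_1})-\ind_{[0,x]}(F_{H_2})$ as a term involving $\int_{F_{H_2}}^{F_{H_1}}\ind_{[0,x]}(z)\,dz$ (handled by duality against $\delta\big(DF_{H_2}/\|DF_{H_2}\|^2\big)$) plus a term involving $\langle DF_{H_1}-DF_{H_2},DF_{H_2}\rangle$, so that only the two-point estimates $E|F_{H_1}-F_{H_2}|^2\le C|H_1-H_2|^2$ and $E\|DF_{H_1}-DF_{H_2}\|^2\le C|H_1-H_2|^2$ are required; these follow from the Peltier--L\'evy-V\'ehel bound $\sup_{s\le T}E|B^{H_1}_s-B^{H_2}_s|^2\le C|H_1-H_2|^2$ with no differentiation in $H$. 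If you keep your route, you must also justify the $\theta$-differentiation under the expectation and prove the $\partial_HK_H$ estimates; switching to the two-point decomposition, together with the positivity observation above and the bound $E[\delta(u)^2]\le\int_0^TE|u_r|^2dr+\int_0^T\int_0^TE|D_\theta u_r|^2\,d\theta\,dr$, closes every gap.
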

%\section{Preliminaries}
\section{Proofs}
Our main tools are the techniques of Malliavin calculus. Hence, for the reader's convenience, let us recall some elements of Malliavin calculus with respect to Brownian motion $W,$ where $W$ is used to present $B^H$ as in (\ref{densityCIR02}). We suppose that $(W_t)_{t\in [0,T]}$ is defined on a complete probability space $(\Omega,\mathcal{F},\mathbb{F},P)$, where $\mathbb{F}=(\mathcal{F}_t)_{t\in [0,T]}$ is a natural filtration generated by the Brownian motion $W.$ For $h\in L^2[0,T],$ we denote by $W(h)$ the Wiener integral
$$W(h)=\int\limits_0^T h(t)dW_t.$$
Let $\mathcal{S}$ denote the dense subset of $L^2(\Omega, \mathcal{F},P)$ consisting of smooth random variables of the form
\begin{equation}\label{ro}
F=f(W(h_1),...,W(h_n)),
\end{equation}
where $n\in \mathbb{N}, f\in C_b^\infty(\mathbb{R}^n),h_1,...,h_n\in L^2[0,T].$ If $F$ has the form (\ref{ro}), we define its Malliavin derivative as the process $DF:=\{D_tF, t\in [0,T]\}$ given by
$$D_tF=\sum\limits_{k=1}^n \frac{\partial f}{\partial x_k}(W(h_1),...,W(h_n)) h_k(t).$$
We shall denote by $\mathbb{D}^{1,2}$ the closure of $\mathcal{S}$ with respect to the norm
$$\|F\|^2_{1,2}:=E|F|^2+E\bigg[\int\limits_0^T|D_u F|^2du\bigg].$$
An important operator in the Malliavin calculus theory is the divergence operator $\delta,$ it is the adjoint of the derivative operator $D.$ The domain of $\delta$ is the set of all functions $u\in L^2(\Omega,L^2[0,T])$ such that
$$E|\langle DF,u\rangle_{L^2[0,T]}|\leq C(u)\|F\|_{L^2(\Omega)},$$
where $C(u)$ is some positive constant depending on $u.$ In particular, if $u\in Dom\,\delta,$ then $\delta(u)$ is characterized by the following duality relationship
$$E\langle DF,u\rangle_{L^2[0,T]}=E[F\delta(u)]\,\,\,\text{for any}\,\,\,F\in \mathbb{D}^{1,2}.$$
%for any $F\in \mathcal{S}$ and $u\in L^2(\Omega,L^2[0,T]).$

%Let $F\in \mathbb{D}^{1,2}$ and $u\in Dom\,\delta$ such that  $Fu\in L^2(\Omega,L^2[0,T]).$ Then $Fu\in Dom\,\delta$ and we have the following relation
%\begin{equation}\label{hj44}
%\delta(Fu)=F\delta(u)-\left\langle DF,u\right\rangle _{L^2[0,T]},
%\end{equation}
%provided the right-hand side is square integrable.
In order to be able to prove Theorem \ref{tyi3}, we need two technical lemmas. %Because $(-B^H_t)_{t\in [0,T]}$ is also a fBm, we can and will assume that $\sigma>0.$
\begin{lem}\label{tt5l}For any $H\in(0,1),$ we have $F_H\in \mathbb{D}^{1,2}$ and
$$\left(\int_0^T |D_rF_{H}|^2dr\right)^{-1}\in L^p(\Omega),\,\,\forall\,\,p\geq 1.$$
\end{lem}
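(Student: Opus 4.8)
The plan is to prove the two assertions separately.

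**Membership $F_H \in \mathbb{D}^{1,2}$.** First I would compute the Malliavin derivative of $F_H$ explicitly. Since $B^H_s = \int_0^s K_H(s,r)\,dW_r = W(K_H(s,\cdot))$, the integrand $e^{as+\sigma B^H_s}$ is a smooth function of a Wiener integral, so by the chain rule $D_r\big(e^{as+\sigma B^H_s}\big) = \sigma K_H(s,r)\,e^{as+\sigma B^H_s}\,\ind_{[0,s]}(r)$. Interchanging the derivative with the time integral (justified by standard approximation arguments for Malliavin-differentiable processes), I obtain
$$D_r F_H = \sigma \int_r^T K_H(s,r)\,e^{as+\sigma B^H_s}\,ds.$$
To confirm $F_H \in \mathbb{D}^{1,2}$, I would bound $\|F_H\|_{1,2}^2 = E|F_H|^2 + E\int_0^T |D_r F_H|^2\,dr$. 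Both terms reduce to integrals of Gaussian exponential moments $E[e^{\sigma B^H_s + \sigma B^H_u}]$, which are finite since $B^H_s$ is Gaussian with variance $s^{2H}$; the kernel factor contributes $\int_0^T\!\int_r^T K_H(s,r)\,ds\,dr$-type integrals that are finite because $\int_0^s K_H(s,r)^2\,dr = s^{2H} < \infty$. Using Cauchy–Schwarz in $s$ and Fubini, these bounds are routine.

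**The negative moments of $\int_0^T |D_r F_H|^2\,dr$.** This is the main obstacle, since I must control the event that the denominator is small. The key observation is a pathwise lower bound: because the integrand $e^{as+\sigma B^H_s}$ is strictly positive, $D_r F_H$ is a sum of positive contributions when the kernel is positive. The difficulty is that $K_H(s,r)$ can change sign when $H < 1/2$, so I cannot simply claim $D_r F_H > 0$. The strategy I would adopt is to restrict attention to $r$ near $T$, where the behaviour of $K_H$ is controlled: for $r$ close to $T$ the dominant contribution to $D_r F_H = \sigma\int_r^T K_H(s,r)e^{as+\sigma B^H_s}\,ds$ comes from $s \approx r$, and $K_H(s,r) \sim c_H (s-r)^{H-1/2}$ near the diagonal, which is positive. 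Concretely I would try to establish a deterministic constant $\kappa>0$ and show
$$\int_0^T |D_r F_H|^2\,dr \;\geq\; \kappa \Big(\inf_{s\in[0,T]} e^{\sigma B^H_s}\Big)^{2}$$
or a similar lower bound in terms of $\inf_s e^{as+\sigma B^H_s}$, possibly after localizing to a subinterval $[T_0,T]$ on which the kernel is positive. This reduces the problem to showing $E\big[(\inf_{s} e^{\sigma B^H_s})^{-2p}\big] = E\big[e^{-2p\sigma\inf_s B^H_s \,\mathrm{sgn}}\big] < \infty$, i.e. finiteness of exponential moments of $\sup_{s}|B^H_s|$. The latter follows from Fernique's theorem, since $\sup_{s\in[0,T]}|B^H_s|$ is the supremum of a Gaussian process and hence has Gaussian tails, so all exponential moments are finite.

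**Assembling the argument.** Once the lower bound of the form $\int_0^T|D_rF_H|^2\,dr \geq \kappa\, e^{-2\sigma \sup_s|B^H_s|}$ (up to constants) is in hand, I would write
$$E\left[\left(\int_0^T|D_rF_H|^2\,dr\right)^{-p}\right] \;\leq\; \kappa^{-p}\, E\left[e^{2p\sigma \sup_{s}|B^H_s|}\right],$$
and invoke Fernique's theorem to conclude the right-hand side is finite for every $p\geq 1$. The delicate point throughout is making the diagonal estimate $K_H(s,r)\gtrsim (s-r)^{H-1/2}$ precise enough—uniformly in the relevant range of $r$—to extract a genuinely positive deterministic constant $\kappa$, especially in the regime $H<1/2$ where the kernel has the more complicated two-term form; this is where I expect most of the technical work to lie.
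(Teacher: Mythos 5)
Your first part (computing $D_rF_H$ and checking $F_H\in\mathbb{D}^{1,2}$) matches the paper and is fine. The second part, however, has a genuine gap: the entire argument hinges on producing a deterministic constant $\kappa>0$ with $\int_0^T|D_rF_H|^2\,dr\geq\kappa\, e^{2\sigma\min_s B^H_s}$ (up to the harmless $e^{-2|a|T}$ factor), and you explicitly defer that step, proposing to localize near the diagonal and use the asymptotics $K_H(s,r)\sim c_H(s-r)^{H-1/2}$, while acknowledging that this is ``where most of the technical work'' would lie. As written, the key quantitative ingredient is announced but not proved, so the proof is incomplete. Moreover, the premise motivating the localization is wrong: $K_H(s,r)$ does not change sign for $H<1/2$. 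In the two-term formula the factor $-(H-\frac{1}{2})$ is positive when $H<\frac12$ and the integral it multiplies is nonnegative, so both terms are nonnegative and $K_H(s,r)\geq 0$ for every $H\in(0,1)$. Hence the pointwise bound $|D_rF_H|\geq|\sigma|\,e^{-|a|T+\sigma\min_{0\le s\le T}B^H_s}\int_r^TK_H(s,r)\,ds$ holds globally, with no restriction on $r$.

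The paper then extracts $\kappa$ with no kernel asymptotics at all: expanding the square and applying Fubini,
\begin{equation*}
\int_0^T\Big(\int_r^TK_H(s,r)\,ds\Big)^2dr=\int_0^T\!\!\int_0^T\Big(\int_0^{s\wedge t}K_H(s,r)K_H(t,r)\,dr\Big)ds\,dt=\int_0^T\!\!\int_0^T E[B^H_sB^H_t]\,ds\,dt=\frac{T^{2H+2}}{2H+2},
\end{equation*}
using only the Volterra representation of the covariance. This yields $\kappa=\sigma^2T^{2H+2}/(2H+2)$ explicitly, after which Fernique's theorem concludes exactly as you indicate. You should replace the diagonal-estimate plan by this identity (or otherwise actually carry out a uniform lower bound on $\int_r^TK_H(s,r)\,ds$); until one of these is done, the negative-moment claim is not established.
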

\begin{proof}By the representation (\ref{densityCIR02}), we have $D_rB^H_s=K_H(s,r)$ for $0\leq r<s\leq T.$ Hence, $F_H\in \mathbb{D}^{1,2}$ and its derivative is given by
$$D_rF_H=\int_r^T \sigma K_H(s,r)e^{as+\sigma B^H_s}ds,\,\,0\leq r\leq T.$$
So we can deduce
$$D_rF_H\geq e^{-|a| T+\sigma \min\limits_{0\leq s\leq T}B^H_s}\int_r^T \sigma K_H(s,r)ds,\,\,0\leq r\leq T.$$
As a consequence,
\begin{align*}
\int_0^T |D_rF_{H}|^2dr&\geq e^{-2|a| T+2\sigma \min\limits_{0\leq s\leq T}B^H_s}\int_0^T\left(\int_r^T \sigma K_H(s,r)ds\right)^2dr\\
&= \sigma^2 e^{-2|a| T+2\sigma \min\limits_{0\leq s\leq T}B^H_s}\int_0^T\left(\int_r^T  K_H(s,r)ds\right)\left(\int_r^T  K_H(t,r)dt\right)dr\\
&= \sigma^2 e^{-2|a| T+2\sigma \min\limits_{0\leq s\leq T}B^H_s}\int_0^T\int_0^T\left(\int_0^{s\wedge t}  K_H(s,r)K_H(t,r)dr\right)dsdt\\
&= \sigma^2 e^{-2|a| T+2\sigma \min\limits_{0\leq s\leq T}B^H_s}\int_0^T\int_0^TE[B^H_sB^H_t]dsdt\\
&=\frac{T^{2H+2}}{2H+2} \sigma^2 e^{-2|a| T+2\sigma \min\limits_{0\leq s\leq T}B^H_s}.
\end{align*}
In the last equality we used the fact that $E[B^{H}_{t}B^{H}_{s}] = \frac{1}{2}(t^{2H}+s^{2H} - |t-s|^{2H}).$ We therefore obtain
$$\left(\int_0^T |D_rF_{H}|^2dr\right)^{-1}\leq \frac{2H+2}{T^{2H+2}\sigma^2}  e^{2|a| T+2\sigma \max\limits_{0\leq s\leq T}(-B^H_s)}.$$
By Fernique's theorem, we have  $e^{2\sigma \max\limits_{0\leq s\leq T}(-B^H_s)}\in L^p(\Omega)$ for any $p\geq 1.$ This completes the proof.
\end{proof}

%$$F=\int_0^T e^{as+\sigma B_s}ds$$

\begin{lem}\label{tt5la}For any $H_1,H_2\in(0,1),$ we have
\begin{equation}\label{1t1}
E|F_{H_1}-F_{H_2}|^2\leq C|H_1-H_2|^2,
\end{equation}
\begin{equation}\label{1t2}
\int_0^T E|D_rF_{H_1}-D_rF_{H_2}|^2dr\leq C|H_1-H_2|^2,
\end{equation}
where $C$ is a positive constant depending on $a,\sigma,T$ and $H_1,H_2.$
\end{lem}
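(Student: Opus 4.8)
The plan is to reduce both bounds to a single key estimate, namely the $L^2(\Omega)$-Lipschitz continuity of the fBm in its Hurst index,
$$E\big[(B^{H_1}_s-B^{H_2}_s)^2\big]=\int_0^s\big(K_{H_1}(s,r)-K_{H_2}(s,r)\big)^2dr\le C|H_1-H_2|^2,\qquad s\in[0,T],$$
where the first equality uses that both processes are built from the \emph{same} driving $W$ via (\ref{densityCIR02}). Once this is available, the exponential structure of $F_H$ and $D_rF_H$ together with Gaussian moment bounds will deliver (\ref{1t1}) and (\ref{1t2}).

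For (\ref{1t1}) I would write $F_{H_1}-F_{H_2}=\int_0^T e^{as}\big(e^{\sigma B^{H_1}_s}-e^{\sigma B^{H_2}_s}\big)ds$ and apply the Cauchy--Schwarz inequality to pass the square inside the integral, so that $|F_{H_1}-F_{H_2}|^2\le T\int_0^T e^{2as}\big(e^{\sigma B^{H_1}_s}-e^{\sigma B^{H_2}_s}\big)^2ds$. The elementary bound $|e^x-e^y|\le|x-y|\,(e^{|x|}+e^{|y|})$ with $x=\sigma B^{H_1}_s$, $y=\sigma B^{H_2}_s$, followed by Cauchy--Schwarz in $\Omega$, bounds the integrand by the product of $\big(E[(B^{H_1}_s-B^{H_2}_s)^4]\big)^{1/2}$ and the exponential moment $\big(E[(e^{|\sigma B^{H_1}_s|}+e^{|\sigma B^{H_2}_s|})^4]\big)^{1/2}$. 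Since $B^{H_i}_s$ is centered Gaussian with variance $s^{2H_i}\le\max(1,T^2)$ on $[0,T]$, the exponential moment is bounded uniformly in $s$, while Gaussianity gives $E[(B^{H_1}_s-B^{H_2}_s)^4]=3\big(E[(B^{H_1}_s-B^{H_2}_s)^2]\big)^2$. Inserting the key estimate then yields (\ref{1t1}).

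For (\ref{1t2}) I would start from $D_rF_{H_i}=\sigma\int_r^TK_{H_i}(s,r)e^{as+\sigma B^{H_i}_s}ds$ and split the difference as
$$D_rF_{H_1}-D_rF_{H_2}=\sigma\int_r^T\big(K_{H_1}(s,r)-K_{H_2}(s,r)\big)e^{as+\sigma B^{H_1}_s}ds+\sigma\int_r^TK_{H_2}(s,r)\big(e^{as+\sigma B^{H_1}_s}-e^{as+\sigma B^{H_2}_s}\big)ds.$$
For the first term I expand the square in $E|\cdot|^2$, bound the exponential moment $E[e^{\sigma(B^{H_1}_s+B^{H_1}_t)}]$ uniformly, apply Cauchy--Schwarz in $s$, and integrate in $r$; after a Fubini exchange this produces $\int_0^T\int_0^s(K_{H_1}(s,r)-K_{H_2}(s,r))^2dr\,ds$, controlled by the key estimate. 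For the second term I reuse the exponential difference bound from the previous paragraph to extract a factor $|H_1-H_2|^2$, leaving the deterministic integral $\int_0^T\big(\int_r^T|K_{H_2}(s,r)|ds\big)^2dr$, which is finite since Cauchy--Schwarz bounds it by $T\int_0^T\int_0^sK_{H_2}(s,r)^2dr\,ds=T\int_0^TE[(B^{H_2}_s)^2]ds=T\int_0^Ts^{2H_2}ds<\infty$.

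The main obstacle is the key estimate itself, i.e. the $H$-Lipschitz bound on the Volterra kernel in $L^2(dr)$. Because $H_1,H_2$ are fixed, I may restrict $H$ to the compact interval $[H_1\wedge H_2,H_1\vee H_2]\subset(0,1)$ and write $K_{H_1}(s,r)-K_{H_2}(s,r)=(H_1-H_2)\,\partial_HK_H(s,r)\big|_{H=\xi}$ by the mean value theorem, reducing the problem to a uniform bound on $\int_0^s\big(\partial_HK_H(s,r)\big)^2dr$. The delicate points are that $K_H$ is given by different formulas for $H\gtrless\tfrac12$, that differentiating in $H$ introduces logarithmic factors multiplying the endpoint singularities $(s-r)^{H-3/2}$ and $r^{1/2-H}$, and that one must check these remain square-integrable in $r$ uniformly over the compact $H$-range. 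This case analysis and singularity bookkeeping is where the real work lies.
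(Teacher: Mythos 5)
Your proof has exactly the same architecture as the paper's: the same Cauchy--Schwarz passage of the square inside the $ds$-integral, the same elementary bound on $|e^x-e^y|$ combined with Gaussian exponential moments and the identity $E|B^{H_1}_s-B^{H_2}_s|^4=3\left(E|B^{H_1}_s-B^{H_2}_s|^2\right)^2$ for (\ref{1t1}); and for (\ref{1t2}) the identical two-term splitting of $K_{H_1}e^{as+\sigma B^{H_1}_s}-K_{H_2}e^{as+\sigma B^{H_2}_s}$, with the identity $\int_0^s K_{H_2}^2(s,r)\,dr=s^{2H_2}$ handling the second term and the same Fubini step reducing the first term to $\int_0^T\int_0^s(K_{H_1}(s,r)-K_{H_2}(s,r))^2\,dr\,ds$. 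The only real divergence is how the key estimate $\sup_{0\le s\le T}E|B^{H_1}_s-B^{H_2}_s|^2\leq C|H_1-H_2|^2$ is obtained: you propose to prove it by hand via the mean value theorem applied to $H\mapsto K_H(s,r)$, correctly flag this as the hard part, and then leave it as a sketch. The paper does not prove it either --- it quotes it as (\ref{1t3}) from the proof of Theorem 4 in \cite{Peltier1995} --- so to close your argument you can simply invoke that reference (or the differentiability in $H$ established in \cite{Koch2019}) instead of carrying out the singularity bookkeeping. If you do insist on a self-contained proof along your lines, note two points your sketch omits: the normalizing constant $c_H$ also depends on $H$ and must be differentiated along with the integrand, and since $K_H$ is given by different formulas on either side of $H=\tfrac12$, an interval $[H_1\wedge H_2,\,H_1\vee H_2]$ straddling $\tfrac12$ forces you either to check that the two expressions glue together smoothly in $H$ at $\tfrac12$ or to split the increment there; with those caveats your plan is viable, but as written the crux remains unproved.
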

\begin{proof} By the H\"older inequality we have
\begin{align*}
E|F_{H_1}-F_{H_2}|^2&=E\big|\int_0^T \big(e^{as+\sigma B^{H_1}_s}-e^{as+\sigma B^{H_2}_s}\big)ds\big|^2\\
&\leq T\int_0^T E\big|e^{as+\sigma B^{H_1}_s}-e^{as+\sigma B^{H_2}_s}\big|^2ds.
\end{align*}
Using the fundamental inequality $|e^x-e^y|\leq \frac{1}{2} |x-y|(e^x+e^y)$ for all $x,y$ we deduce
\begin{align*}
E|F_{H_1}-F_{H_2}|^2&\leq \frac{T}{4}\int_0^T E\big|(\sigma B^{H_1}_s-\sigma B^{H_2}_s)(e^{as+\sigma B^{H_1}_s}+e^{as+\sigma B^{H_2}_s})\big|^2ds\\
&\leq \frac{T\sigma^2}{4}\int_0^T \left(E|B^{H_1}_s-B^{H_2}_s|^4\right)^{\frac{1}{2}}\left(E|e^{as+\sigma B^{H_1}_s}+e^{as+\sigma B^{H_2}_s}|^4\right)^{\frac{1}{2}}ds\\
&\leq \frac{T\sigma^2}{4}\int_0^T \left(E|B^{H_1}_s-B^{H_2}_s|^4\right)^{\frac{1}{2}}\left(8E[e^{4as+4\sigma B^{H_1}_s}]+8E[e^{4as+4\sigma B^{H_2}_s}]\right)^{\frac{1}{2}}ds\\
&= \frac{T\sigma^2}{4}\int_0^T \left(E|B^{H_1}_s-B^{H_2}_s|^4\right)^{\frac{1}{2}}\left(8e^{4as+8\sigma^2 s^{2H_1}}+8e^{4as+8\sigma^2 s^{2H_2} }\right)^{\frac{1}{2}}ds\\
&\leq\frac{T\sigma^2}{4}\int_0^T \left(E|B^{H_1}_s-B^{H_2}_s|^4\right)^{\frac{1}{2}}\left(8e^{4as+8\sigma^2 s^{2H_1}}+8e^{4as+8\sigma^2 s^{2H_2} }\right)^{\frac{1}{2}}ds.
\end{align*}
It is known from the proof of Theorem 4 in \cite{Peltier1995} that there exists a positive constant $C$ such that
\begin{equation}\label{1t3}
\sup\limits_{0\leq s\leq T}E|B^{H_1}_s-B^{H_2}_s|^2\leq C|H_1-H_2|^2.
\end{equation}
On the other hand, we have $E|B^{H_1}_s-B^{H_2}_s|^4=3(E|B^{H_1}_s-B^{H_2}_s|^2)^2$ because $B^{H_1}_s-B^{H_2}_s$ is a Gaussian random variable for every $s\in[0,T].$ So we can conclude that there exists a positive constant $C$ such that
$$E|F_{H_1}-F_{H_2}|^2\leq C|H_1-H_2|^2.$$
%\begin{align*}
%E|B^{H_1}_s-B^{H_2}_s|^4&=3\left(\int_0^s |K_{H_1}(s,u)-K_{H_2}(s,u)|^2du\right)^2
%\end{align*}
%$$E[F_H]=\int_0^T e^{as+\frac{1}{2}\sigma^2 s}ds$$
%$$D_rF=\int_r^T\sigma e^{as+\sigma B_s}ds$$
%$$D_rF_{H_1}-D_rF_{H_2}= \sigma \int_r^T\left(K_{H_1}(s,r)e^{as+\sigma B^{H_1}_s}-K_{H_2}(s,r)e^{as+\sigma B^{H_2}_s}\right)ds,\,\,0\leq r\leq T.$$
To finish the proof, let us verify (\ref{1t2}). By the H\"older and triangle inequalities we obtain
\begin{align*}
&E|D_rF_{H_1}-D_rF_{H_2}|^2\leq \sigma^2T \int_r^TE\big|K_{H_1}(s,r)e^{as+\sigma B^{H_1}_s}-K_{H_2}(s,r)e^{as+\sigma B^{H_2}_s}\big|^2ds\\
&\leq 2\sigma^2T \int_r^T|K_{H_1}(s,r)-K_{H_2}(s,r)|^2E[e^{2as+2\sigma B^{H_1}_s}]+K^2_{H_2}(s,r)E\big|e^{2as+2\sigma B^{H_1}_s}-e^{as+\sigma B^{H_2}_s}\big|^2ds,
\end{align*}
and hence,
\begin{align*}
\int_0^TE|D_rF_{H_1}-D_rF_{H_2}|^2dr&\leq 2\sigma^2T \int_0^TE[e^{2as+2\sigma B^{H_1}_s}]\int_0^s|K_{H_1}(s,r)-K_{H_2}(s,r)|^2drds\\
&+2\sigma^2T \int_0^TE\big|e^{2as+2\sigma B^{H_1}_s}-e^{as+\sigma B^{H_2}_s}\big|^2\int_0^sK^2_{H_2}(s,r)drds\\
&= 2\sigma^2T \int_0^Te^{2as+2\sigma^2 s^{2H_1}}E|B^{H_1}_s-B^{H_2}_s|^2ds\\
&+2\sigma^2T \int_0^TE\big|e^{2as+2\sigma B^{H_1}_s}-e^{as+\sigma B^{H_2}_s}\big|^2s^{2H_2}ds.
\end{align*}
Notice that $\int_0^sK^2_{H_2}(s,r)dr=E|B^{H_2}_s|^2=s^{2H_2}.$ Thus the estimate (\ref{1t2}) follows from (\ref{1t1}) and (\ref{1t3}).
\end{proof}
{\bf Proof of Theorem \ref{tyi3}.} For the simplicity, we write $\langle ., .\rangle$ instead of $\langle ., .\rangle_{L^2[0,T]}.$ Borrowing the arguments used in the proof of Proposition 2.1.1 in \cite{nualartm2}, we let $\psi$ be a nonnegative smooth function with compact support, and set $\varphi(y)=\int_{-\infty}^y\psi(z)dz.$ Given $Z\in \mathbb{D}^{1,2},$ we know that $\varphi(Z)$ belongs to $\mathbb{D}^{1,2}$ and making the scalar product of its derivative with $DF_{H_2}$ obtains
$$\langle D\varphi(Z), DF_{H_2}\rangle=\psi(Z)\langle DZ, DF_{H_2}\rangle.$$
Fixed $x\in \mathbb{R}_+,$ by an approximation argument, the above equation holds for $\psi(z)=\ind_{[0,x]}(z).$ Choosing $Z=F_{H_1}$ and $Z=F_{H_2}$ we obtain
$$\langle D\int_{-\infty}^{F_{H_1}}\ind_{[0,x]}(z)dz, DF_{H_2}\rangle=\ind_{[0,x]}(F_{H_1})\langle DF_{H_1}, DF_{H_2}\rangle,$$
$$\langle D\int_{-\infty}^{F_{H_2}}\ind_{[0,x]}(z)dz, DF_{H_2}\rangle=\ind_{[0,x]}(F_{H_2})\langle DF_{H_2}, DF_{H_2}\rangle.$$
Hence, we can get
\begin{align*}
&\langle D\int_{F_{H_2}}^{F_{H_1}}\ind_{[0,x]}(z)dz, DF_{H_2}\rangle=\ind_{[0,x]}(F_{H_1})\langle DF_{H_1}, DF_{H_2}\rangle-\ind_{[0,x]}(F_{H_2})\langle DF_{H_2}, DF_{H_2}\rangle\\
&=\left(\ind_{[0,x]}(F_{H_1})-\ind_{[0,x]}(F_{H_2})\right)\langle DF_{H_2}, DF_{H_2}\rangle+\ind_{[0,x]}(F_{H_1})\langle DF_{H_1}-DF_{H_2}, DF_{H_2}\rangle.
\end{align*}
This, together with the fact that $\|DF_{H_2}\|^2:=\langle DF_{H_2}, DF_{H_2}\rangle>0\,\,a.s.$ gives us
\begin{align*}
\ind_{[0,x]}(F_{H_1})-\ind_{[0,x]}(F_{H_2})&=\frac{\langle D\int_{F_{H_2}}^{F_{H_1}}\ind_{[0,x]}(z)dz, DF_{H_2}\rangle}{\|DF_{H_2}\|^2}-\frac{\ind_{[0,x]}(F_{H_1})\langle DF_{H_1}-DF_{H_2}, DF_{H_2}\rangle}{\|DF_{H_2}\|^2}.
\end{align*}
Taking the expectation yields
\begin{align*}
P\left(F_{H_1}\leq x\right)&-P\left(F_{H_2}\leq x\right)=E[\ind_{[0,x]}(F_{H_1})-\ind_{[0,x]}(F_{H_2})]\\
&=E\left[\int_{F_{H_2}}^{F_{H_1}}\ind_{[0,x]}(z)dz\delta\left(\frac{DF_{H_2}}{\|DF_{H_2}\|^2}\right)\right]-E\left[\frac{\ind_{[0,x]}(F_{H_1})\langle
DF_{H_1}-DF_{H_2}, DF_{H_2}\rangle}{\|DF_{H_2}\|^2}\right]
\end{align*}
By the H\"older inequality
\begin{align*}
\sup\limits_{x\geq 0}|P\left(F_{H_1}\leq x\right)&-P\left(F_{H_2}\leq x\right)| \leq E\bigg|(F_{H_1}-F_{H_2})\delta\left(\frac{DF_{H_2}}{\|DF_{H_2}\|^2}\right)\bigg|+E\bigg|\frac{\langle
DF_{H_1}-DF_{H_2}, DF_{H_2}\rangle}{\|DF_{H_2}\|^2}\bigg|\\
&\leq \left(E|F_{H_1}-F_{H_2}|^2\right)^{\frac{1}{2}}\left(E\delta\left(\frac{DF_{H_2}}{\|DF_{H_2}\|^2}\right)^2\right)^{\frac{1}{2}}+E\bigg|\frac{\|
DF_{H_1}-DF_{H_2}\|}{\|DF_{H_2}\|}\bigg|\\
&\leq \left(E|F_{H_1}-F_{H_2}|^2\right)^{\frac{1}{2}}\left(E\delta\left(\frac{DF_{H_2}}{\|DF_{H_2}\|^2}\right)^2\right)^{\frac{1}{2}}+\left(E\|
DF_{H_1}-DF_{H_2}\|^2\right)^{\frac{1}{2}}\left(E\bigg[\frac{1}{\|DF_{H_2}\|^2}\bigg]\right)^{\frac{1}{2}}.
\end{align*}
Recalling Lemma \ref{tt5la}, we obtain $$\sup\limits_{x\geq
0}|P\left(F_{H_1}\leq x\right)-P\left(F_{H_2}\leq x\right)|\leq
C|H_1-H_2|\left[\left(E\delta\left(\frac{DF_{H_2}}{\|DF_{H_2}\|^2}\right)^2\right)^{\frac{1}{2}}
+\left(E\bigg[\frac{1}{\|DF_{H_2}\|^2}\bigg]\right)^{\frac{1}{2}}\right].$$
Thanks to Lemma \ref{tt5l} we have
$$E\bigg[\frac{1}{\|DF_{H_2}\|^2}\bigg]=E\bigg[\left(\int_0^T |D_rF_{H_2}|^2dr\right)^{-1}\bigg]<\infty.$$
Thus we can obtain (\ref{hk1}) by checking the finiteness of
$E[\delta(u)^2],$ where
$$u_r:=\frac{D_rF_{H_2}}{\|DF_{H_2}\|^2},\,\,0\leq r\leq T.$$
It is known from Proposition 1.3.1 in \cite{nualartm2} that
$$E[\delta(u)^2]\leq \int_0^TE|u_r|^2dr+\int_0^T\int_0^T E|D_\theta u_r|^2d\theta dr.$$
We have
$$\int_0^TE|u_r|^2dr=E\bigg[\frac{1}{\|DF_{H_2}\|^2}\bigg]<\infty.$$
Furthermore, by the chain rule for Malliavin derivative, we have
$$D_\theta u_r=\frac{D_\theta D_rF_{H_2}}{\|DF_{H_2}\|^2}-2\frac{D_rF_{H_2}\langle D_\theta DF_{H_2},DF_{H_2}\rangle}{\|DF_{H_2}\|^4},\,\,0\leq \theta\leq T.$$
Hence, by the H\"older inequality,
\begin{align*}\int_0^T\int_0^T
E|D_\theta u_r|^2d\theta dr&\leq 2E\left[\frac{\int_0^T\int_0^T
|D_\theta D_rF_{H_2}|^2d\theta
dr}{\|DF_{H_2}\|^4}\right]+8E\left[\frac{\int_0^T\int_0^T |D_\theta
D_rF_{H_2}|^2d\theta dr}{\|DF_{H_2}\|^4}\right]\\
&\leq 10\left(E\left[\left(\int_0^T\int_0^T |D_\theta
D_rF_{H_2}|^2d\theta
dr\right)^2\right]\right)^{\frac{1}{2}}\left(E\left[\frac{1}{\|DF_{H_2}\|^8}\right]\right)^{\frac{1}{2}}.
\end{align*}
We now observe that
$$D_\theta D_rF_{H_2}=\int_{r\vee\theta}^T \sigma^2 K_{H_2}(s,r)K_{H_2}(s,\theta)e^{as+\sigma B^{H_2}_s}ds,\,\,0\leq r,\theta\leq T.$$
Hence,
$$|D_\theta D_rF_{H_2}|^2\leq T \sigma^4\int_{r\vee\theta}^T K^2_{H_2}(s,r)K^2_{H_2}(s,\theta)e^{2as+2\sigma B^{H_2}_s}ds,\,\,0\leq r,\theta\leq T$$
and we obtain
$$\int_0^T\int_0^T |D_\theta D_rF_{H_2}|^2d\theta dr\leq T \sigma^4\int_{0}^T s^{4H_2}e^{2as+2\sigma B^{H_2}_s}ds,$$
which implies that
$$E\left[\left(\int_0^T\int_0^T |D_\theta
D_rF_{H_2}|^2d\theta
dr\right)^2\right]\leq T^4 \sigma^8\int_{0}^T s^{8H_2}e^{4as+8\sigma^2 s^{2H_2}}ds<\infty.$$
Finally, we have $E\left[\frac{1}{\|DF_{H_2}\|^8}\right]<\infty$ due to Lemma \ref{tt5l}. So we can conclude that $E[\delta(u)^2]$ is finite. This finishes the proof of Theorem \ref{tyi3}.
\begin{rem}Given a bounded and continuous function $\psi,$ with the exact proof of Theorem \ref{tyi3}, we also have
$$|E[\psi(F_{H_1})]-E[\psi(F_{H_2})]|\leq C|H_1-H_2|.$$
This kind of estimates has been investigated by Richard and Talay for the solution of fractional stochastic differential equations. However, Theorem 1.1 in \cite{Richard2017} requires $H_2=\frac{1}{2}$ and $\psi$ to be H\"older continuous of order $2+\beta$ with $\beta>0.$
\end{rem}

\noindent {\bf Acknowledgments.}  This research was funded by Vietnam National Foundation for Science and Technology Development (NAFOSTED) under grant number 101.03-2019.08.

\end{document}